\definecolor{darkblue}{rgb}{0.0,0,0.7}
\newcommand{\darkblue}{\color{darkblue}}
\definecolor{darkred}{rgb}{0.68,0,0}
\definecolor{darkgreen}{rgb}{0,.38,0}
\newcommand{\defn}[1]{\emph{\darkblue #1}}
\setlist[enumerate]{
	label=\textnormal{({\roman*})},
	ref={\roman*}}
\def\th@plain{%
	\thm@notefont{}
	\itshape 
}
\def\th@definition{%
	\thm@notefont{}
	\normalfont 
}
\newtheorem{thm}{Theorem}
\newtheorem{lemma}[thm]{Lemma}
\newtheorem*{claim*}{Claim}
\theoremstyle{definition}
\newtheorem{ex}[thm]{Example}
\numberwithin{figure}{section}
\numberwithin{equation}{section}
\def\<{\langle}
\def\>{\rangle}
\def\der{\text{{\rm D}}}
\def\contr{\text{{\rm C}}}
\def\subd{\text{{\rm S}}}
\def\0{{\mathbf 0}}
\def\.{\hskip.06cm}
\def\.{\hskip.06cm}
\newcounter{sideremark}
\title{A higher-dimensional version of Fáry's Theorem}
\author[Karim Adiprasito]{Karim Adiprasito}
\author[Zuzana Patáková]{Zuzana Patáková}
\address{Sorbonne Université and Université Paris Cité, CNRS, IMJ-PRG, F-75005 Paris, France}
\email{karim.adiprasito@imj-prg.fr}
\address{Department of Algebra, Faculty of Mathematics and Physics, Charles University, Prague, Czech Republic}
\email{patakova@karlin.mff.cuni.cz}
\thanks{\today}
\begin{document}
	
	\maketitle

		\begin{abstract}
			We prove a generalization of István Fáry's celebrated theorem to higher dimensions. Namely, we show that if a finite simplicial complex $X$ can be piecewise linearly embedded into a $d$-dimensional PL manifold $M$, then there is a triangulation of $M$ containing $X$ as a subcomplex.
		\end{abstract}

		\bigskip
		
		\bigskip

		Fáry's theorem is simple: 
		
		\begin{thm}[\cite{fary}]
			Any simple, planar graph can be drawn without crossings in the plane so that its edges are straight line segments.
		\end{thm}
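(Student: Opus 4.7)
The plan is to proceed by induction on the number of vertices $n$ of the graph $G$. First I would reduce to the case of a \emph{maximal} planar graph, i.e.\ one in which every face (including the outer face) is a triangle: any simple planar graph can be extended by adding edges until this is the case, and a straight-line drawing of the extension restricts to one of the original graph. So it suffices to prove the theorem for maximal planar graphs.

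The base case $n=3$ is a single triangle, which is trivial. For the inductive step, suppose the result holds for all maximal planar graphs on fewer than $n$ vertices, and let $G$ be a maximal planar graph on $n\ge 4$ vertices. By Euler's formula, the average degree in $G$ is strictly less than $6$, so there exists a vertex $v$ with $\deg(v)\le 5$. Its neighbors form a cycle $C$ bounding the star of $v$. Delete $v$ (and all incident edges) and triangulate the resulting $\deg(v)$-gonal face by adding diagonals; this yields a maximal planar graph $G'$ on $n-1$ vertices. By the inductive hypothesis, $G'$ has a straight-line drawing in the plane.

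In that drawing, the cycle $C$ bounds a simple polygonal region $P$. To produce a straight-line drawing of $G$, I would erase the added diagonals inside $P$ and place $v$ at a point from which every vertex of $C$ is visible along a segment contained in $P$, i.e.\ a point in the \emph{kernel} of $P$. Any such placement yields straight edges $v w$ for $w\in C$ that do not cross each other or the boundary of $P$, hence a valid straight-line drawing of $G$. The main obstacle is therefore ensuring that the kernel of $P$ is non-empty. This is where the bound $\deg(v)\le 5$ is used: a simple polygon with at most five vertices must be star-shaped (by a short case analysis on the possible reflex angles), so its kernel is a non-empty open region and any interior point of it will do.

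As a cleaner alternative that sidesteps the kernel issue, one can invoke Tutte's spring embedding theorem: for any $3$-connected planar graph, fixing the outer face as a convex polygon and placing each interior vertex at the barycenter of its neighbors yields a straight-line drawing with all faces convex. Since any maximal planar graph on $n\ge 4$ vertices is $3$-connected, and a straight-line drawing of a supergraph restricts to one of a subgraph, this also proves Fáry's theorem.
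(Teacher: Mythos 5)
The paper does not prove this statement at all: it is quoted as the classical theorem of F\'ary \cite{fary}, serving only as motivation for the higher-dimensional Theorem~\ref{t:main}, so there is no proof in the paper to compare yours against. Your proposal is the standard textbook argument (essentially F\'ary's original induction, with Tutte's spring embedding offered as an alternative), and it is correct in outline. Two points deserve more care than you give them. First, when you delete the degree-$\le 5$ vertex $v$ and retriangulate the resulting face, an added diagonal may already be an edge of $G$ elsewhere in the plane, which would make $G'$ a multigraph; one must check that among the possible diagonals of the $4$- or $5$-gon there is always an admissible choice (this is where planarity is used again, and it is a genuine, if routine, verification). Second, your inductive hypothesis only asserts that $G'$ has \emph{some} straight-line drawing, but you then need the neighbors of $v$ to bound a simple polygonal region $P$ that is exactly the union of the faces you created by retriangulating; this requires either strengthening the induction to prescribe the combinatorial embedding (and outer face), or invoking Whitney's uniqueness of embeddings of $3$-connected planar graphs. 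The kernel argument itself is fine: a simple polygon with at most five vertices has at most two reflex vertices and a short case analysis shows it is star-shaped. With those two gaps patched, your proof is complete and is, for what it is worth, far more elementary than anything in this paper, whose actual content begins only after F\'ary's theorem is taken as given.
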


		That is simpler and more beautiful than rarely possible. So, could this be true in greater generality? Initially, the answer is no. There are many complexes that have an embedding into an Euclidean space, but not one that is affine on every simplex \cite{AKM23, BS92}: In other words, when embedding $k$-dimensional complexes into $\mathbb{R}^d$, and $d\le 2k$, it can happen that the complex embeds, but does not embed linearly. In other dimensions, as one says, the linear and piecewise linear picture coincide, but only for a trivial reason: Every simplicial complex of dimension $k$ has a geometric realization in $\mathbb{R}^d$ if $d$ exceeds $2k$.
		
		
		So, paper done, right? There is no positive theorem, apparently.
		
		Except, there is: 
		We prove the following:
		
		\begin{thm}[Kind-of Fáry's theorem]\label{t:main}
			Consider $X$ a finite simplicial complex, and a piecewise linear embedding $\varphi:X\longrightarrow M$, where $M$ is a PL $d$-manifold. Then there is a triangulation $M'$ of $M$ that contains
			$X$ as a subcomplex.
		\end{thm}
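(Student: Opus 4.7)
The plan is to proceed by induction on $k$, constructing triangulations $T_0, T_1, \ldots, T_n$ of $M$ (with $n = \dim X$) so that at stage $k$ the image $\varphi(X^{(k)})$ of the $k$-skeleton of $X$ is a subcomplex of $T_k$, each simplex $\sigma \in X^{(k)}$ realized as a \emph{single} simplex of $T_k$. Since $\varphi$ is a PL embedding, we first fix a subdivision $X^*$ of $X$ and a triangulation $T$ of $M$ such that $\varphi : X^* \hookrightarrow T$ is simplicial; in particular $\varphi(X)$ is a subpolyhedron of $M$. The base case $k=0$ is then immediate: any triangulation of $M$ with $\varphi(X^{(0)})$ among its vertices works.

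For the inductive step, assume $T_k$ is constructed. Fix a $(k+1)$-simplex $\sigma$ of $X$. Then $\varphi(\sigma)$ is a PL $(k+1)$-ball in $M$ whose boundary $\varphi(\partial \sigma)$ is, by the inductive hypothesis, exactly the standardly-triangulated $\partial \Delta^{k+1}$ sitting as a subcomplex of $T_k$. The crucial sub-lemma would be: given a PL $(k+1)$-ball $D$ inside a triangulation $T$ of a PL $d$-manifold whose boundary is already the standard $\partial \Delta^{k+1}$ subcomplex, there exists a triangulation $T'$ of $M$ agreeing with $T$ outside a regular neighborhood of $D$, in which $D$ is realized as a single $(k+1)$-simplex. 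To prove this sub-lemma, one would take a regular neighborhood $N$ of $D$ as a subcomplex of a subdivision of $T$; by the regular neighborhood theorem, $N$ is a PL $d$-ball in which $D$ sits as a collared subpolyhedron, and one then re-triangulates $N$ (keeping its boundary triangulation intact) so that $D$ becomes a single $(k+1)$-simplex, using that the pair $(N,D)$ is PL-equivalent to a standard model $(\Delta^{k+1}*L, \Delta^{k+1})$ for a suitable triangulated $(d-k-2)$-sphere $L$. Applying the sub-lemma to each $(k+1)$-simplex in turn then produces $T_{k+1}$.

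The main obstacle I anticipate is ensuring compatibility between the local re-triangulations for different $(k+1)$-simplices $\sigma, \sigma'$ whose regular neighborhoods may meet along shared lower-dimensional faces already realized in $T_k$. I would handle this by choosing the regular neighborhoods $N_\sigma$ pairwise disjoint in their interiors --- which is possible because the interiors of the PL balls $\varphi(\sigma)$ are disjoint --- and by arranging their intersections with the current triangulation of $\varphi(X^{(k)})$ to conform to the previously fixed combinatorial structure. Carrying out this relative version of the sub-lemma, which amounts to a PL isotopy-extension statement adapted to the simplicial combinatorics and insisting on matching a prescribed boundary triangulation, is the technical heart of the argument and is, I expect, the step requiring the greatest care.
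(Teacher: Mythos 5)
Your approach attempts to prove a strictly stronger statement than the theorem: you try to retriangulate $M$ so that the actual image $\varphi(\sigma)$ of each simplex becomes a single simplex of the triangulation. This is false in general, and your key sub-lemma is where it breaks. You claim that a regular neighborhood $N$ of the PL $(k+1)$-ball $D=\varphi(\sigma)$ gives a pair $(N,D)$ PL-equivalent to the standard model $(\Delta^{k+1}\ast L,\,\Delta^{k+1})$; that model is an \emph{unknotted} ball pair, so the claim amounts to asserting that $D$ is locally flat. In codimension $2$ this fails: take $d=4$, $k+1=2$, and $D=n\ast\gamma$ the cone over a knot $\gamma$ in $S^3$ --- exactly the example given in the paper. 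This $D$ is locally knotted at the cone point, whereas any single $(k+1)$-simplex $\tau$ of any triangulation $M'$ of a PL $d$-manifold is automatically locally flat: its star is $\tau\ast\operatorname{lk}(\tau,M')$ with $\operatorname{lk}(\tau,M')$ a PL sphere, hence an unknotted pair. So no triangulation of $S^4$ can contain this particular $D$ as a single $2$-simplex, and your inductive step cannot be carried out. Zeeman's unknotting theorem would rescue the sub-lemma only in codimension at least $3$.

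The theorem survives only because one is allowed to \emph{move} the embedding: the statement asks for some copy of $X$ as a subcomplex, not for $\varphi(X)$ itself, and the paper explicitly remarks that modifying $\varphi$ is necessary. The paper's route is accordingly quite different: it first obtains a subdivision $\widetilde X$ of $X$ as a subcomplex of a triangulation of $M$ (Bing/Zeeman), arranges strong inducedness via (biased) derived subdivisions, and then undoes the subdivision purely combinatorially through Alexander--Newman edge subdivisions and valid edge contractions, extending each move to the ambient triangulation. The contractions are precisely the step that changes the embedding and sidesteps the local-knotting obstruction. To repair your argument you would have to replace each $\varphi(\sigma)$ by a different, unknotted ball with the prescribed boundary before straightening --- at which point you are no longer extending $\varphi$, and the compatibility issues you flag in your last paragraph become the whole problem rather than a technicality.
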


  Throughout the paper we consider finite simplicial complexes, and refer the reader to Zeeman \cite{Zee63} for an introduction to PL topology.
			
		Some people may consider this theorem as a folklore, but as knowledge of PL topology is gradually lost, we feel it is useful to record it here. We note that it is necessary to modify the embedding map $\varphi:X\longrightarrow M$ to achieve this.

		\begin{ex}
			Consider a 2-dimensional simplex $\sigma$, that we wish to embed into the 4-sphere $S^4$ as follows:
			Assume $S^4$ is the suspension of a triangulated $S^3$, with apex points $n$ and $s$. Consider $\gamma$ a knot in $S^3$, and the 2-dimensional disk $n\ast \gamma$, where $\ast$ stands for a join. 
			
			Choose now a "bad" homeomorphism $\varphi$ sending $(\sigma, \partial \sigma)$ to the pair $(n\ast \gamma, \gamma)$ so that in particular some interior point of $\sigma$ is mapped to $n$. Obviously, every point of the image $\varphi(\sigma^\circ)$ apart from $n$ has a flat neighborhood with respect to $\sigma$, that is, a neighborhood where the embedding is homeomorphic to the standard embedding of $\mathbb{R}^2$ into $\mathbb{R}^4$.
		\end{ex}
		
		Let us first note a lemma of Bing that almost solves the problem:
		\begin{lemma}[Section I\.2\cite{Bing83}]\label{lem:bing}
			Consider $\widetilde{X}$ a simplicial complex, and a simplexwise linear embedding $\varphi:\widetilde{X}\longrightarrow M$, where $M$ is a PL $d$-manifold. Then there is a triangulation $\widetilde{M}$ of $M$ that contains $\widetilde{X}$ as a subcomplex.
		\end{lemma}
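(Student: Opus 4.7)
The plan is to overlay an arbitrary PL triangulation of $M$ with the image $Y:=\varphi(\widetilde{X})$, producing a polyhedral cell complex that refines both structures, and then triangulate this cell complex without disturbing $Y$.

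First, I would verify that $Y$ is itself a simplicial complex embedded in $M$. Because $\varphi$ is injective and linear on each simplex, for any two simplices $\sigma_1,\sigma_2\in\widetilde{X}$ we have
\[
  \varphi(\sigma_1)\cap\varphi(\sigma_2) \;=\; \varphi(\sigma_1\cap\sigma_2),
\]
which is the image of their common face. Thus the collection $\{\varphi(\sigma):\sigma\in\widetilde{X}\}$ consists of finitely many affine simplices that meet in shared faces and so constitute a bona fide simplicial complex $Y$ sitting inside $M$.

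Second, I fix any PL triangulation $T$ of $M$. Since both $T$ and $Y$ are piecewise linear with respect to the PL structure on $M$, in local PL charts each simplex $\tau\in T$ meets each $\varphi(\sigma)\in Y$ in a convex polytope. After iterated stellar subdivision of $T$ (or equivalently a general-position perturbation, which is available because $\widetilde X$ is finite), the overlay $\mathcal{R}:=T\vee Y$ is a well-defined polyhedral cell complex refining both structures and containing $Y$ as a subcomplex. Finally, I triangulate $\mathcal{R}$ to a simplicial complex $\widetilde{M}$ while preserving $Y$, via the classical pulling construction: order the cells of $\mathcal{R}\setminus Y$ by increasing dimension and inductively replace each cell $c$ by the cone, from a chosen interior point $p_c$, over the already-triangulated boundary $\partial c$. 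Since every cell of $Y$ is already simplicial it is untouched, and the resulting $\widetilde{M}$ is the desired triangulation of $M$ containing $\widetilde{X}$ as a subcomplex.

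The principal obstacle is the overlay step: one must align the PL structure on $M$ (in which $T$ is simplicial) with the affine structure in which $\varphi$ is simplexwise linear, so that intersections $\varphi(\sigma)\cap\tau$ are genuinely convex polytopes patching consistently across PL charts. This is exactly where simplexwise linearity of $\varphi$ is crucial, and iterated stellar subdivision of $T$ against the finitely many simplices $\varphi(\sigma)$ suffices to make the two structures compatible on every closed star, at which point the pulling step is purely combinatorial.
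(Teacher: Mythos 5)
There is a genuine gap, and it sits exactly where you locate the ``principal obstacle.'' (Note that the paper itself offers no proof of this lemma: it is quoted from Section I.2 of Bing, who treats only $d=3$, and the authors deliberately fall back on the weaker Zeeman Lemma~\ref{lem:Zeeman} so as not to rely on it; so your argument has to stand on its own.) The overlay step fails as stated. The ``common refinement'' $T\vee Y$ of a $d$-dimensional triangulation $T$ with the positive-codimension complex $Y=\varphi(\widetilde X)$ is \emph{not} a polyhedral cell complex: if a simplex of $Y$ meets the interior of a $d$-simplex $\tau$ of $T$, the closure of a component of $\tau\setminus Y$ is not a convex cell and its frontier is not a union of lower-dimensional cells of the overlay --- a single vertex of $Y$ in the interior of a triangle of $T$ already exhibits this. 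Iterated stellar subdivision of $T$ at faces of $T$ can never repair this (the new vertices it introduces have nothing to do with $Y$), and a general-position perturbation makes matters worse rather than better, since generically the vertices of $Y$ land in the interiors of top-dimensional simplices of $T$. Making the two structures compatible is not a preprocessing step; it \emph{is} the content of the lemma.

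Even granting a common refinement, your construction proves the wrong statement. Any cell complex refining both $T$ and $Y$ cuts each simplex $\varphi(\sigma)$ into the pieces $\varphi(\sigma)\cap\tau$, $\tau\in T$, so only a \emph{subdivision} of $Y$ survives as a subcomplex; the assertion that ``every cell of $Y$ is already simplicial [and] is untouched'' is exactly where this is lost. The pulling step then produces a triangulation of $M$ containing a subdivision of $\widetilde X$ --- which is the weaker Lemma~\ref{lem:Zeeman}, not Lemma~\ref{lem:bing}. The whole point of the distinction between the two lemmas (and of the paper's main theorem) is to keep $\widetilde X$ itself, unsubdivided, as a subcomplex; this is what forces Bing's delicate local analysis in dimension $3$, and what the paper's machinery of biased derived subdivisions, strongly induced subcomplexes, and valid edge contractions is built to handle. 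To salvage your approach you would need a mechanism that subdivides $T$ in a neighborhood of $Y$ without ever cutting a simplex of $Y$, and no such mechanism is supplied.
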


		Bing only proved this in the case of $M$ being a $3$-manifold, but his proof extends naturally to the case of arbitrary dimension. In order not to rely on this, we note that it is enough that some subdivision of $\widetilde{X}$ extends to a triangulation of $M$. 

\begin{lemma}[Chapter 1, Lemma 4 \cite{Zee63}]\label{lem:Zeeman}
			Consider $\widetilde{X}$ a simplicial complex, and a simplexwise linear embedding $\varphi:\widetilde{X}\longrightarrow M$, where $M$ is a PL $d$-manifold. Then there is a triangulation $\widetilde{M}$ of $M$ that contains some subdivision of $\widetilde{X}$ as a subcomplex.
\end{lemma}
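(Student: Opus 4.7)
The plan is to make the map $\varphi$ simplicial by compatibly subdividing both its source and its target, so that Lemma \ref{lem:bing} becomes unnecessary. Fix an arbitrary PL triangulation $T$ of $M$; this exists by the definition of a PL manifold. Since $\varphi$ is linear on each simplex of $\widetilde{X}$, the map $\varphi:\widetilde{X}\to|T|$ is a PL map between polyhedra.

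The next step is to construct a common subdivision of source and target. For each pair $(\sigma,\tau)$ consisting of a simplex $\sigma$ of $\widetilde{X}$ and a simplex $\tau$ of $T$, the intersection $\varphi(\sigma)\cap\tau$ is the intersection of two affine simplices inside the PL chart containing $\tau$, hence a convex polytope. Taken together, these polytopes produce a polyhedral subdivision of $|T|$ in which the images $\varphi(\sigma)$ appear as subcomplexes. One then simplicially triangulates this polyhedral subdivision, proceeding by induction on dimension and coning from a generic interior point of each non-simplicial cell, to obtain a triangulation $T'$ of $M$. Pulling this subdivision back along the injective, simplexwise linear map $\varphi$ yields a subdivision $\widetilde{X}'$ of $\widetilde{X}$.

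With these choices, $\varphi:\widetilde{X}'\to T'$ is a simplicial embedding, so $\varphi(\widetilde{X}')$ is literally a subcomplex of $T'$; this gives the required triangulation of $M$ containing the subdivision $\widetilde{X}'$ of $\widetilde{X}$ as a subcomplex.

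The main obstacle is arranging the above subdivisions coherently across shared faces of simplices of $T$: the simplicial subdivision chosen inside each $\tau$ must extend whatever has already been fixed on $\partial\tau$, while still containing each polyhedral piece $\varphi(\widetilde{X})\cap\tau$ as a subcomplex. This is the standard technical point in PL topology, handled inductively on $\dim\tau$ by triangulating the $(d-1)$-skeleton of $T$ first and then extending over each top-dimensional simplex via a cone construction from a generic interior point. A minor preliminary step is to perturb $T$ slightly (within its PL-equivalence class) so that its simplices are in PL general position with respect to $\varphi(\widetilde{X})$, which guarantees that each intersection $\varphi(\sigma)\cap\tau$ has the expected combinatorial type.
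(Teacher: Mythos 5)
The paper does not actually prove this lemma; it is quoted verbatim from Zeeman's notes (Chapter 1, Lemma 4), so there is no in-paper argument to compare against, and your outline is in substance the standard proof from that source: realize $\varphi(\widetilde X)$ as a compact subpolyhedron of $|T|$, refine $T$ to a convex-cell decomposition containing it, triangulate that decomposition by induction on skeleta with cone points, and pull back along $\varphi$. Two imprecisions are worth repairing. First, $\varphi(\sigma)\cap\tau$ is a convex cell only if $T$ is a triangulation with respect to which $\varphi$ is simplexwise linear; for an \emph{arbitrary} PL triangulation $T$ it is merely a compact polyhedron. Either take $T$ to be the triangulation implicit in the hypothesis, or accept non-convex pieces and cut them further --- harmless here, since the conclusion only asks that \emph{some} subdivision of $\widetilde X$ survive. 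Second, the cells $\varphi(\sigma)\cap\tau$ do not by themselves ``produce a polyhedral subdivision of $|T|$'': they cover only the image of $\widetilde X$, and the complementary regions of each $\tau$ need not be convex. One must first extend to a genuine convex-cell decomposition of all of $M$ (for instance by cutting each $\tau$ along the affine hyperplanes spanned by the facets of these cells), and your skeleton-by-skeleton coning is the right mechanism for making these choices coherent across shared faces, but it must be applied to that extended decomposition rather than to the image cells alone. Finally, the general-position perturbation of $T$ is unnecessary (convexity of an intersection of convex sets needs no transversality) and is counterproductive, since moving $T$ within its PL class can destroy the linearity of $\varphi$ relative to $T$. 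With those repairs the argument is correct and is essentially Zeeman's.
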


		\section{The proof}
		\subsection{Subdivisions.}
		Recall that a \defn{stellar subdivision} of a simplicial complex $\Delta$ at a simplex $\sigma$ is the result of removing all simplices of $N_\sigma \Delta$, the collection of simplices in $\Delta$ containing $\sigma$, and attaching the cone $v_\sigma\ast \partial N_\sigma \Delta$ to it along the natural identification map. 
		
		Recall that the \defn{derived subdivision} of a simplicial complex $\Delta$ is the simplicial complex $\der\Delta$ of all order chains of simplices (or equivalently, applying stellar subdivisions at all faces in reverse order of inclusion\footnote{I.e.\ we first subdivide all facets, then all the faces of codimension 1, etc.}).
		
		Given two simplicial complexes $\Gamma \subseteq \Delta$, we say that $\Gamma$ is an \defn{induced subcomplex} of $\Delta$ if every simplex in $\Delta$ with all vertices in $\Gamma$ is a simplex in $\Gamma$ as well. It is easy to see that
		if $\Gamma \subseteq \Delta$, then the derived subdivision of $\Gamma$ is an induced subcomplex of the derived subdivision\footnote{Derived subdivisions are ``compatible'' in the following sense: the derived subdivision of $\Gamma$ coincides with the derived subdivision of $\Delta$ restricted to $\Gamma$.} of $\Delta$, see Figure \ref{f:induced} for illustration.

		\begin{figure}[hbt]
		 \includegraphics[page=1]{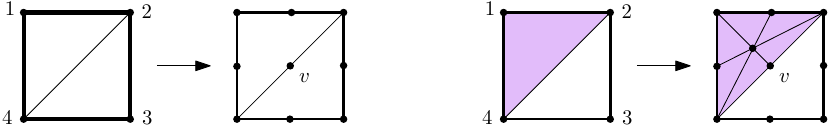}
		 \caption{Derived subdivision $\der\Gamma$ as an induced subcomplex of the derived subdivision $\der\Delta$. Example shows a 4-cycle $\Gamma = \{12, 23, 34, 14\}$ (we list only the top dimensional faces) and $\Delta = \{12, 23, 34, 14, 24\}$ on the left, $\Delta = \{124, 23, 34\}$ on the right, respectively. \\ The vertex $v$ certifies that the final complex is not strongly induced.}\label{f:induced}
		\end{figure}

Consider now the \defn{biased derived subdivision} of a pair of simplicial complexes $\Gamma \subseteq \Delta$: It is obtained by applying stellar subdivisions at faces of $\Delta$ not in $\Gamma$, in order of reverse inclusion; for illustration see Figure \ref{f:strongly_induced} (left). In the following lemma we relate it to strongly induced subcomplexes: A subcomplex $\Gamma$ of a simplicial complex $\Delta$ is a  \defn{strongly induced subcomplex} of $\Delta$ if for every simplex $\sigma$ in $\Delta \setminus \Gamma$, $\Gamma \cap \mathrm{st}_\sigma\, \Delta$ consists of a single simplex, where $\mathrm{st}_\sigma\, \Delta$ is the \defn{star} of $\sigma$ in $\Delta$, that is, the simplicial closure of all faces of $\Delta$ containing $\sigma$. Note that strongly induced subcomplex is automatically an induced subcomplex, the opposite implication does not hold as demonstrated on Figure \ref{f:induced}.

  \begin{lemma}\label{lem:stronginduced}
  Let $\Gamma$ be an induced subcomplex of $\Delta$ and let $\Delta'$ be the biased derived subdivision of the pair $\Gamma \subseteq \Delta$. Then $\Gamma$ is a strongly induced subcomplex of $\Delta'$.
  \end{lemma}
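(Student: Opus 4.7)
The plan is to give an explicit combinatorial description of the simplices of the biased derived subdivision $\Delta'$, then compute $\Gamma\cap\mathrm{st}_{\sigma'}\Delta'$ directly for $\sigma'\in\Delta'\setminus\Gamma$, using the induced-subcomplex hypothesis in exactly one place.

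First I would establish, by induction on the sequence of stellar subdivisions, that every simplex of $\Delta'$ has vertex set of the form $V(\gamma)\cup\{v_{\sigma_0},\ldots,v_{\sigma_k}\}$, where $\gamma\in\Gamma$ (possibly empty), $\sigma_0\subsetneq\sigma_1\subsetneq\cdots\subsetneq\sigma_k$ is a (possibly empty) ascending chain in $\Delta\setminus\Gamma$, and whenever the chain is nonempty, $\gamma\subsetneq\sigma_0$. The induction is routine: a stellar subdivision at $\sigma\notin\Gamma$ removes only simplices containing $\sigma$, none of which can lie in $\Gamma$ (since $\Gamma$ is a subcomplex and $\sigma\notin\Gamma$), and replaces them by cones of apex $v_\sigma$ over the corresponding deletions. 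Thus $\Gamma\subseteq\Delta'$ as a subcomplex, and a simplex of $\Delta'$ lies in $\Gamma$ iff its chain part is empty.

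Next, fix $\sigma'\in\Delta'\setminus\Gamma$, write it via the parametrization as $(\gamma,\sigma_0\subsetneq\cdots\subsetneq\sigma_k)$ with $k\geq 0$, and set $\gamma_*$ to be the simplex of $\Gamma$ on vertex set $V(\sigma_0)\cap V(\Gamma)$. The induced hypothesis on $\Gamma$ is exactly what ensures $\gamma_*\in\Gamma$; also $\gamma_*\subsetneq\sigma_0$ (since $\sigma_0\notin\Gamma$) and $\gamma\subseteq\gamma_*$. A simplex $\tau\in\Gamma$ belongs to $\mathrm{st}_{\sigma'}\Delta'$ iff $\tau$ is a face of some $\tau'\in\Delta'$ containing $\sigma'$. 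Writing $\tau'$ with parameters $(\gamma',\tau_0'\subsetneq\cdots)$, we require $\gamma\subseteq\gamma'\in\Gamma$, $\{\sigma_i\}\subseteq\{\tau_j'\}$, and $\gamma'\subsetneq\tau_0'\subseteq\sigma_0$; moreover $\tau\subseteq\gamma'$, since $\tau\in\Gamma$ contains no barycenter vertices. The constraint $\gamma'\subsetneq\sigma_0$ with $\gamma'\in\Gamma$ forces $V(\gamma')\subseteq V(\sigma_0)\cap V(\Gamma)=V(\gamma_*)$, so $\gamma'\subseteq\gamma_*$; and $\gamma'=\gamma_*$ is realised by $\tau'=(\gamma_*,\sigma_0\subsetneq\cdots\subsetneq\sigma_k)$. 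Therefore $\Gamma\cap\mathrm{st}_{\sigma'}\Delta'$ equals the closure of $\gamma_*$, a single simplex.

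I expect the main obstacle to be the careful bookkeeping in the parametrization of $\Delta'$, especially the degenerate cases where $\sigma_0$ is a vertex of $\Delta\setminus\Gamma$ (so that $v_{\sigma_0}$ is just $\sigma_0$ itself and the stellar subdivision at $\sigma_0$ is trivial) or where $V(\sigma_0)\cap V(\Gamma)=\emptyset$ (so $\gamma_*$ is the empty simplex); once the parametrization is set up, the rest of the argument reduces to a single invocation of the induced-subcomplex property of $\Gamma$.
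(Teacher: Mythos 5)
Your proposal is correct and follows essentially the same route as the paper's proof: you parametrize the simplices of $\Delta'$ as a join of a face of $\Gamma$ with a chain of faces of $\Delta\setminus\Gamma$, identify $\Gamma\cap\mathrm{st}_{\sigma'}\,\Delta'$ with $\sigma_0\cap\Gamma$ (the minimal chain element intersected with $\Gamma$), and invoke inducedness exactly where the paper does. The only difference is that you spell out the verification that the star intersection equals $\sigma_0\cap\Gamma$, which the paper asserts without detail.
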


\begin{proof}
Let $\sigma$ be a simplex of $\Delta'$ not in $\Gamma$.  We want to show that 
there is a simplex $\tau$ of $\Gamma$ such that \[ \tau\ =\ \Gamma \cap \mathrm{st}_\sigma\, \Delta'.\]

The vertices of $\Delta'$ correspond to faces of $\Delta$ not in $\Gamma$, and vertices of $\Gamma$. 
Hence, seeing $\sigma$ as a union of its vertices, it consists of the disjoint union of a face $\hat{\sigma}$ of $\Gamma$ and a chain $\sigma_1 < \sigma_2 < \dots < \sigma_k$ of faces in $\Delta$ not in $\Gamma$, where $\hat{\sigma} < \sigma_1$.  The intersection of $\mathrm{st}_\sigma\, \Delta'$ with $\Gamma$ coincides with the intersection $\sigma_1 \cap \Gamma \subseteq \Delta$. Since $\Gamma$ is induced in $\Delta$, the latter is a face.
See  Figure \ref{f:strongly_induced} for illustration.
  \begin{figure}[hbt]
        \includegraphics[page=2]{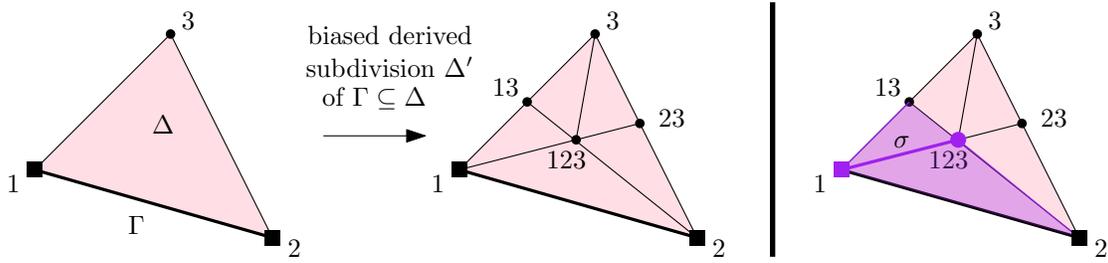}
        \caption{On the left: Biased derived subdivision $\Delta'$ of the pair $\Gamma \subseteq \Delta$, where $\Delta$ is a 2-simplex and $\Gamma$ is the edge spanned by vertices 1 and 2. On the right: Let $\Gamma \subseteq \Delta$ be as on the left and let $\sigma = \{1, 123\}$ be the highlighted edge of $\Delta'$. Then $\hat \sigma = \{1\}, \sigma_1 = \{123\}$, $\mathrm{st}_\sigma\, \Delta'$ is the highlighted subcomplex and $\Gamma \cap \mathrm{st}_\sigma\, \Delta' = \Gamma \cap \sigma_1 = 12$.}\label{f:strongly_induced}
        \end{figure}
\end{proof}

 \subsection{Edge contractions \& subdivisions}     
		Let $\Gamma$ be a simplicial complex. An \defn{edge contraction} is the operation of removing the neighborhood $N_e \Gamma$ of the edge $e=\{0,1\}$ and identifying $N_0\partial N_e \Gamma$ and $N_1\partial N_e \Gamma$; we denote the result by $\contr_e\Gamma$. We say that the edge $e$ is \defn{valid} if $\contr_e\Gamma$ is a simplicial complex and call it a \defn{valid edge contraction}. Notice that the edge $e$ is valid if and only if $e$ is not contained in a \defn{missing simplex} of $\Gamma$, by which we mean a simplex which is not contained in $\Gamma$ but whose all proper faces are. See Figure \ref{f:subd_contr} for illustration.
		
		An \defn{edge subdivision}, denoted by $\subd_e\Gamma$, is a stellar subdivision at the edge $e$.
		Let $\Gamma$ be a (induced) subcomplex of $\Delta$ and  let $e$ be an edge in $\Gamma$, then $\subd_e\Gamma$ is a (induced) subcomplex of $\subd_e\Delta$. See Figure \ref{f:subd_contr} for illustration.

		\begin{figure}[hbt]
		 \includegraphics[page=3]{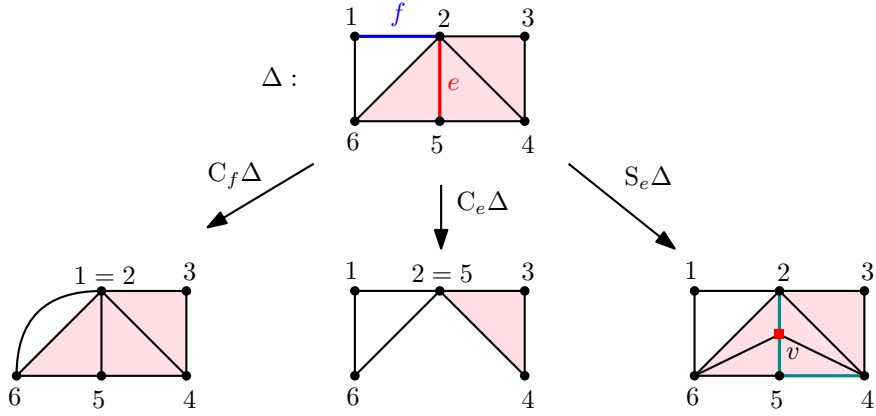}
		 \caption{Example of an edge contraction and an edge subdivision. Edge $f$ is not valid as it is contained in a missing simplex determined by vertices $1,2$ and $6$. Edge $e$ is valid in $\Delta$ and $\contr_e\Delta$ is a simplicial complex. Let $\Gamma$ be a subcomplex of $\Delta$ with top dimensional faces $25, 45$. Then $\subd_e\Gamma$ is a subcomplex of $\subd_e\Delta$, but it is not induced as certified by vertices $v, 4, 5$.}
		 \label{f:subd_contr}
		\end{figure}

		Now we inspect the relation between valid edge contraction and a strong inducedness.
  
  \begin{lemma}\label{lem:edge_contr}
  Let  $\Gamma$ be a strongly induced subcomplex of $\Delta$ and let $e$ be a valid edge of $\Gamma$. Then $\contr_e\Gamma$ is a strongly induced subcomplex of $\contr_e\Delta$. In particular, $\contr_e\Delta$ is a simplicial complex.
  \end{lemma}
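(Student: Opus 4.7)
The plan is to establish the two assertions of the lemma in order: first that the edge $e=\{0,1\}$ is valid in $\Delta$ (so that $\contr_e\Delta$ is a simplicial complex), and second that $\contr_e\Gamma$ sits inside $\contr_e\Delta$ as a strongly induced subcomplex. A convenient reformulation of validity I will use throughout is the link condition: $e$ is valid in a simplicial complex $K$ iff, for every $\tau\in K$ with $0,1\notin\tau$ satisfying $\tau\cup\{0\},\tau\cup\{1\}\in K$, one has $\tau\cup e\in K$.

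First I would verify validity in $\Delta$. Take $\tau\in\Delta$ with $0,1\notin\tau$ and $\tau\cup\{0\},\tau\cup\{1\}\in\Delta$. If $\tau\in\Gamma$, then by inducedness (implied by strong inducedness) both $\tau\cup\{0\}$ and $\tau\cup\{1\}$ lie in $\Gamma$, and validity of $e$ in $\Gamma$ yields $\tau\cup e\in\Gamma\subseteq\Delta$. If $\tau\notin\Gamma$, strong inducedness produces $\mu\in\Gamma$ with $\Gamma\cap\mathrm{st}_\tau\,\Delta=\langle\mu\rangle$; the singletons $\{0\},\{1\}\in\Gamma$ lie in $\mathrm{st}_\tau\,\Delta$ (as faces of $\tau\cup\{0\},\tau\cup\{1\}$), hence in the intersection, forcing $e\subseteq\mu$. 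Since $\mu\in\mathrm{st}_\tau\,\Delta$, some $\nu\in\Delta$ contains $\tau\cup\mu$, and in particular $\tau\cup e\in\Delta$.

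Next I would prove strong inducedness of $\contr_e\Gamma$ in $\contr_e\Delta$. Fix $\bar\sigma\in\contr_e\Delta\setminus\contr_e\Gamma$ and pick a lift $\sigma\in\Delta\setminus N_e\Delta$; because no lift of $\bar\sigma$ lies in $\Gamma$, $\sigma\notin\Gamma$, and strong inducedness supplies $\mu\in\Gamma$ with $\Gamma\cap\mathrm{st}_\sigma\,\Delta=\langle\mu\rangle$. I expect the contraction image $\bar\mu$ to satisfy $\contr_e\Gamma\cap\mathrm{st}_{\bar\sigma}\,\contr_e\Delta=\langle\bar\mu\rangle$. The inclusion $\supseteq$ follows because $\mu\cup\sigma\in\Delta$ (as $\mu\in\mathrm{st}_\sigma\,\Delta$) projects to a simplex of $\contr_e\Delta$ containing both $\bar\mu$ and $\bar\sigma$. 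For $\subseteq$, given $\bar\rho$ in the intersection, lift a witnessing coface $\bar\xi\supseteq\bar\rho\cup\bar\sigma$ to $\xi\in\Delta\setminus N_e\Delta$ with $\xi\supseteq\sigma$, and lift $\bar\rho$ to $\rho\in\Gamma$. If $\rho\nsubseteq\xi$---which can happen only when $\bar\rho$ contains $v_{01}$ and $\rho$ uses the endpoint of $e$ opposite to the one in $\xi$---swap that endpoint: the resulting simplex is a face of $\xi\in\Delta$ whose vertices all lie in $\Gamma$, so it lies in $\Gamma$ by inducedness. This adjusted lift belongs to $\Gamma\cap\mathrm{st}_\sigma\,\Delta=\langle\mu\rangle$, whence $\bar\rho\subseteq\bar\mu$.

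The main obstacle is the ambiguity in the choice of $\sigma$ when $v_{01}\in\bar\sigma$: then $\bar\sigma$ admits two potential lifts $\sigma_0=\tau\cup\{0\}$ and $\sigma_1=\tau\cup\{1\}$, and if both are in $\Delta$ their strongly inducing simplices $\mu_0,\mu_1$ could a priori differ, making $\bar\mu$ ambiguous. The link condition from Step 1 is the crucial tool: it forces $\tau\cup e\in\Delta$, from which the argument of Step 1 puts $e\subseteq\mu_0$; then $\mu_0\cup\sigma_0\in\Delta$ is a common coface of $\mu_0$ and $\sigma_1$, placing $\mu_0\in\mathrm{st}_{\sigma_1}\,\Delta$, and strong inducedness at $\sigma_1$ gives $\mu_0\subseteq\mu_1$. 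Symmetry yields $\mu_0=\mu_1$, so $\bar\mu$ is well-defined and the previous paragraph applies uniformly.
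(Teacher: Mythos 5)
Your proof is correct, but it is organized quite differently from the paper's. The paper's argument is a short proof by contradiction: for validity of $e$ in $\Delta$ it invokes the fact that a missing simplex of $\Delta$ meeting $\Gamma$ in more than one vertex is a missing simplex of $\Gamma$, and for strong inducedness it reduces to the case where the offending simplex is disjoint from $\contr_e\Gamma$ (so that it lifts uniquely and does not contain the contracted vertex $v_{01}$) and then observes that $\contr_e\Gamma\cap\mathrm{st}_\sigma\,\contr_e\Delta$ is the image of $\Gamma\cap\mathrm{st}_\sigma\,\Delta$ under the contraction, hence the image of a single simplex. You instead give a direct verification for an arbitrary $\bar\sigma$, which forces you to confront the ambiguity between the two lifts $\tau\cup\{0\}$ and $\tau\cup\{1\}$; your link-condition computation showing $e\subseteq\mu_0$ and hence $\mu_0=\mu_1$ is precisely the bookkeeping the paper avoids by its reduction. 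What your version buys is that it makes explicit the lift-choosing subtleties (for $\bar\sigma$, for the witnessing coface $\bar\xi$, and the endpoint swap for $\bar\rho$) that the paper's ``we may assume $\sigma$ does not intersect $\contr_e\Gamma$'' glosses over; what it costs is length. Both proofs rest on the same essential inputs: strong inducedness upstairs applied to a lift, and validity of $e$ in $\Gamma$ promoted to validity in $\Delta$ via (strong) inducedness.
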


  \begin{proof}
  First we show that $\contr_e \Delta$ is a simplicial complex. We need to argue that $e$ is a valid edge in $\Delta$, which follows from the obvious fact that missing simplices of $\Delta$ intersecting $\Gamma$ in more than a single vertex are missing simplices of $\Gamma$. 
  
  Clearly, $\contr_e\Gamma$ is a subcomplex of $\contr_e\Delta$.
  Let us now assume for contradiction that $\contr_e\Gamma$ is not strongly induced in $\contr_e\Delta$. That is, there exists a simplex $\sigma$ in $\contr_e\Delta\setminus \contr_e\Gamma$ for which $\contr_e\Gamma \cap \mathrm{st}_\sigma\, \contr_e\Delta$ is more than a single face. We may assume that $\sigma$ does not intersect $\contr_e\Gamma$. 
  Since $\contr_e\Gamma \cap \mathrm{st}_\sigma\, \contr_e\Delta$ is the image of $\Gamma \cap \mathrm{st}_\sigma\, \Delta$ under the contraction operation, $\Gamma \cap \mathrm{st}_\sigma\, \Delta$ is not a single simplex either.
  \end{proof}

Let us now look at  the relation between edge subdivision and a strong inducedness.
  
\begin{lemma}\label{lem:edge_subd} Let $\Gamma$ be a strongly induced subcomplex of $\Delta$ and let $e$ be an edge of $\Gamma$. Then $\subd_e\Gamma$ is a strongly induced subcomplex of $(\subd_e\Delta)'$, where  $'$ denotes the biased derived subdivision of the pair $\subd_e\Gamma \subseteq \subd_e\Delta$.
\end{lemma}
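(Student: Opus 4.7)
The strategy is to reduce the statement to Lemma~\ref{lem:stronginduced}: it suffices to establish that $\subd_e\Gamma$ is an \emph{induced} subcomplex of $\subd_e\Delta$, because applying Lemma~\ref{lem:stronginduced} to this pair will then immediately yield that $\subd_e\Gamma$ is strongly induced in the biased derived subdivision $(\subd_e\Delta)'$. The bulk of the work is therefore verifying this inducedness, which I expect to be the principal (but essentially routine) technical step; once it is in hand the conclusion is formal.

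To begin, I would note that strongly induced implies induced, so $\Gamma$ is induced in $\Delta$. Write $e=\{a,b\}$ and let $v$ denote the new vertex introduced by the stellar subdivision. The vertex set of $\subd_e\Delta$ is that of $\Delta$ augmented by $v$, and similarly for $\subd_e\Gamma$. To check inducedness of $\subd_e\Gamma$ in $\subd_e\Delta$, I would take an arbitrary simplex $\tau\in\subd_e\Delta$ with all vertices in $\subd_e\Gamma$, and split into two cases according to whether $v\in\tau$.

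If $v\notin\tau$, then $\tau$ is a simplex of $\Delta$ not containing $e$ (since such simplices are untouched by the subdivision); it has all vertices in $\Gamma$, so induced-ness of $\Gamma$ in $\Delta$ places $\tau\in\Gamma\subseteq\subd_e\Gamma$. If $v\in\tau$, write $\tau=\{v\}\cup\nu$; from the description of the stellar subdivision at $e$, one has $\nu\cup e\in\Delta$ and $\nu$ meets $\{a,b\}$ in at most one vertex. All vertices of $\nu\cup e$ lie in $\Gamma$ (those of $\nu$ by assumption, and $a,b\in e\subseteq\Gamma$), so induced-ness gives $\nu\cup e\in\Gamma$; subdividing this simplex at $e$ produces $\{v\}\cup\nu=\tau$ inside $\subd_e\Gamma$. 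Thus $\subd_e\Gamma$ is induced in $\subd_e\Delta$.

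Having established inducedness of the pair $\subd_e\Gamma\subseteq\subd_e\Delta$, I apply Lemma~\ref{lem:stronginduced} to this pair to conclude that $\subd_e\Gamma$ is a strongly induced subcomplex of the biased derived subdivision $(\subd_e\Delta)'$, which is exactly the assertion of the lemma. The only delicate point is the combinatorial identification of simplices of $\subd_e\Delta$ containing $v$ with simplices of $\Delta$ containing $e$; once this bookkeeping is in place, the argument is mechanical.
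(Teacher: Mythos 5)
Your proposal is correct and follows exactly the paper's route: the paper's own proof is the one-line observation that $\subd_e\Gamma$ is induced in $\subd_e\Delta$ (a fact it states without proof just before Lemma~\ref{lem:edge_contr}) followed by an appeal to Lemma~\ref{lem:stronginduced}. You merely spell out the inducedness verification that the paper leaves implicit, and your case analysis of simplices containing the new vertex is accurate.
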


\begin{proof}
Since $\subd_e\Gamma$ is an induced subcomplex of $\subd_e\Delta$, we can apply Lemma~\ref{lem:stronginduced}.
\end{proof}

\begin{proof}[Proof of Theorem \ref{t:main}]	
Let us note that because $X$ embeds piecewise linearly,  there is a refinement $\widetilde{X}$ of $X$ that embeds simplexwise linearly. Let $\widetilde{M}$ denote the simplicial triangulation of $M$ output by the Bing Lemma \ref{lem:bing} (or alternatively the Zeeman Lemma \ref{lem:Zeeman}), i.e.\ we have that the subdivision $\widetilde X$ of $X$ is a subcomplex of $\widetilde M$.
		
Consider now the simplicial complex $\mathrm{sd}\, \widetilde{M}$, which  is the biased derived subdivision of the pair $\der\widetilde X \subseteq \der\widetilde M$,
where $\der$ denotes the operation of derived subdivision.
In words,  we get $\mathrm{sd}\, \widetilde{M}$
by first applying the derived subdivision to $\widetilde{M}$ and $\widetilde{X}$ (this makes the derived subdivision $\der\widetilde X$  an induced subcomplex $\der\widetilde M$), and then performing the biased derived subdivision at the obtained pair. By Lemma \ref{lem:stronginduced}, $\der \widetilde X$ is a strongly induced subcomplex of $\mathrm{sd}\, \widetilde{M}$.		

Recall that by results of Alexander \cite[Corrolary 10:2d]{Ale30}, and Newman \cite{New31}, two PL homeomorphic complexes are related by edge subdivisions and valid edge contractions.
		
Hence, we apply a suitable sequence of edge subdivisions and valid edge contractions of edges of $\der\widetilde X \subseteq \mathrm{sd}\, \widetilde{M}$ to transform $\der\widetilde{X}$ into $X$. Lemmata \ref{lem:edge_contr} and \ref{lem:edge_subd} allow us to extend the edge contractions and edge subdivisions to the whole complex ($\mathrm{sd}\, \widetilde{M}$ at the first step), while the edge subdivision is followed by the respective biased derived subdivision as explained in Lemma \ref{lem:edge_subd}.  It follows that the resulting triangulation $M'$ of $M$ is the desired one. 

Note that Lemmata \ref{lem:edge_contr} and \ref{lem:edge_subd} 
guarantee that $M'$ is indeed a triangulation, meaning that it stays a simplicial complex in each step during the process (in order to extend the edge contraction from a subcomplex to the whole complex we need the subcomplex to be strongly induced, see Lemma \ref{lem:edge_contr}). Note also that the biased derived subdivisions following edge contractions used in Lemma \ref{lem:edge_subd} do not change the respective subcomplex, so they are irrelevant with respect to the sequence of edge subdivisions and valid edge contractions we need to perform (by the result of Alexander and Newman), however they are obviously relevant for the triangulation.
\end{proof}

		\subsection*{Acknowledgments}
		We thank Misha Gromov and Grigori Avramidi for asking us this question.
		
		The first author is supported by the
		Centre National de Recherche Scientifique, and the Horizon Europe ERC Grant number:
		101045750 / Project acronym: HodgeGeoComb. The second author is supported by the GA\v CR grant no. 22-19073S.
		
		
		{\footnotesize

			\vskip.5cm
		}

	\end{document}